\documentclass{amsart}

\newtheorem{theorem}{Theorem}[section]
\newtheorem{definition}[theorem]{Definition}
\newtheorem{lemma}[theorem]{Lemma}
\newtheorem{corollary}[theorem]{Corollary}
\newtheorem{proposition}[theorem]{Proposition}
\newtheorem{question}[theorem]{Question}
\usepackage{tikz}
\usepackage{color}
\usetikzlibrary{graphs,positioning}

\renewcommand{\d}{\vec}

\title{Infinite graphs with finite metric dimension}

\author{Csaba Bir\'o}
\address{Department of Mathematics, University of Louisville, Louisville, KY 40292}
\email{csaba.biro@louisville.edu}

\author{Caroline E. Boone}
\address{Department of Mathematics, University of Louisville, Louisville, KY 40292}
\email{caroline.boone@louisville.edu}

\author{Beth Novick}
\address{School of Mathematical and Statistical Sciences, Clemson University, Clemson, SC 29634}
\email{nbeth@clemson.edu}

\author{Hazel Torek}
\address{School of Computing, Clemson University, Clemson, SC 29634}
\email{ctorek@clemson.edu}

\begin{document}

\begin{abstract}
We study the metric dimension (strong and weak) of infinite graphs. In particular, our main interest is characterizing infinite graphs with finite dimension.  Our main results:
(1) graphs with more than one end have infinite strong dimension;
(2) for graphs with a finite number of cycles, the weak dimension   is finite if and only if the graph  has finitely many vertices of degree three, and 
the strong dimension  is finite if and only if the graph has one end and finitely many vertices of degree three.

\end{abstract}

\maketitle

\section{Introduction}

We begin by introducing relevant definitions and background material.  We consider both finite and infinite graphs,  our main focus being infinite graphs that are locally finite and  connected.  See \cite{diestel} for graph theory terminology. 

\subsection{The metric dimension of a graph}
Let $G$ be a connected graph, and let $W$ be a set of its vertices. We call $W$ a \emph{(weak) resolving set}, if for all $u,v\in V(G)$, $u\neq v$, there is a $w\in W$ such that $d(u,w)\neq d(v,w)$. We say that $w$ \emph{(weakly) resolves} $u$ and $v$. The smallest cardinality of a weak resolving set is called the \emph{(weak) metric dimension} of $G$, denoted by $\beta(G)$. Note that in the case of infinite graphs, $\beta(G)$ may be infinite, or even uncountable.

Metric dimension for finite graphs was introduced independently by Harary and Melter \cite{Harary} and Slater \cite{Slater}.  Since then,  the parameter has been studied extensively and applied in diverse areas.  The problem of determining $\beta(G)$ for an arbitrary connected graph is known to be NP-hard,  see \cite{Khuller}.  For papers that discuss applications and complexity,  see for example those cited in the publications of C{\'a}ceres et al.  \cite{Caceres1} and Belmonte et al. \cite{Belmonte}.  

Seb\H{o} and Tannier \cite{Sebo} observed that it is possible for two non-isomorphic graphs with the same vertex set to have the same list of distances to some resolving set.  This observation motivated them to introduce a stronger parameter:  
We call $W$ a \emph{strong resolving set}, if for all $u,v\in V(G)$, $u\neq v$, there is a $w\in W$ such that a shortest $u$--$w$ path contains $v$ or a shortest $v$--$w$ path contains $u$. We say that $w$ \emph{strongly resolves} $u$ and $v$. The smallest cardinality of a strong resolving set is called the \emph{strong metric dimension} of $G$, denoted by $\beta_S(G)$.
The problem of determining $\beta_S(G)$ for an arbitrary finite graph is also NP-hard, as was shown by Oellermann and Peters-Fransen, who gave a polynomial-time equivalence to a certain vertex covering problem \cite{Peters-Fransen}.  See \cite{kratica} for a survey on strong metric dimension of finite graphs.  Also see \cite{Benakli} for a survey including references for weak and strong metric dimension. 

The study of metric dimension of infinite graphs was initiated by C{\'a}ceres et al. \cite{Caceres} and the body of literature on that topic is much smaller.  To the best of our knowledge,  the current article is the first to consider strong metric dimension of infinite graphs.

Clearly, if $w$ strongly resolves $u$ and $v$, then it also weakly resolves them, so a strong resolving set is always a weakly resolving set. This shows $\beta_S(G)\geq\beta(G)$.

In this paper, for sake of brevity, we will usually drop the word \emph{metric} from these names, and just use \emph{weak dimension} and \emph{strong dimension}. If a proof only involves one of these, we will often just use the word \emph{dimension}. Similar rules apply for the terms \emph{resolving}, and \emph{resolving set}. However, the notations $\beta$ and $\beta_S$ will always be used correctly.

If $G$ is not connected, the terms above are not defined. Therefore we will always assume that the graph of our interest is connected, and use this fact in our arguments freely.

Graphs that are not locally finite have infinite weak (and hence strong) dimension. This is due to the following theorem \cite{Caceres}.

\begin{theorem}
If $\beta(G)=k$, then $G$ has maximum degree at most $3^k-1$.
\end{theorem}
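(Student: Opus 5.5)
The plan is a pigeonhole argument on distance vectors restricted to the neighborhood of a single vertex. Let $W=\{w_1,\dots,w_k\}$ be a resolving set with $|W|=k=\beta(G)$. Fix an arbitrary vertex $v$ and let $N(v)$ be its set of neighbors. For each $u\in N(v)$, consider the vector of differences
\[
\delta(u)=\bigl(d(u,w_1)-d(v,w_1),\dots,d(u,w_k)-d(v,w_k)\bigr).
\]
Since $u$ and $v$ are adjacent, the triangle inequality gives $|d(u,w_i)-d(v,w_i)|\le 1$ for every $i$. So each coordinate of $\delta(u)$ lies in $\{-1,0,1\}$, and $\delta(u)$ takes at most $3^k$ values.

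Next, the map $u\mapsto\delta(u)$ is injective on $N(v)$. If $\delta(u)=\delta(u')$, then $d(u,w_i)=d(u',w_i)$ for all $i$. Because $W$ is resolving, this forces $u=u'$.

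It remains to show that the all-zero vector is never attained. If $\delta(u)=\vec 0$, then $u$ and $v$ have identical distances to every $w_i$, which contradicts $u\neq v$ and $W$ being resolving. Hence $\delta$ injects $N(v)$ into a set of $3^k-1$ vectors, so $\deg(v)\le 3^k-1$. Since $v$ was arbitrary, the maximum degree of $G$ is at most $3^k-1$.

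For an infinite graph the argument only needs some resolving set of size $k$, and one exists because $\beta(G)=k$ is finite. It also shows along the way that $G$ is locally finite, since a vertex of infinite degree would give infinitely many distinct vectors in a finite set. The argument has no real obstacle; the only point needing care is excluding the zero vector, which is what improves the bound from $3^k$ to $3^k-1$.
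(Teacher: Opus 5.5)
Your proof is correct and complete. The paper does not prove this statement itself; it quotes it from C\'aceres et al.~\cite{Caceres}. Your argument is the standard proof of this bound: each vertex $x$ of the closed neighborhood $N[v]$ is sent to its vector of differences $d(x,w_i)-d(v,w_i)\in\{-1,0,1\}$. This map is injective because $W$ resolves, and $v$ itself takes the zero vector.

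As you observe, the argument needs neither finiteness nor local finiteness of $G$. That is exactly what the paper relies on when it uses this theorem to conclude that graphs which are not locally finite have infinite dimension.
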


Since our interest is mainly to study infinite graphs with finite metric dimension, we will restrict out attention to locally finite graphs. We will make this assumption automatically in the rest of the paper, unless otherwise noted. Note that locally finite graphs are necessarily countable, and the dimension will always be countable. So one could replace the symbol $\infty$ with $\omega$ throughout the paper. We decided to use $\infty$ instead of $\omega$ to emphasize the combinatorial (as opposed to set theoretical) nature of our results.

\section{Additional definitions}

A \emph{ray} is a graph $(V,E)$ of the form $V=\{v_0,v_1,\ldots\}$, and $E=\{v_0v_1,v_1v_2,\ldots\}$. A subray of a ray is called a \emph{tail} of that ray. Following \cite{diestel}, we will often refer to a ray by the natural sequence of its vertices and write $v_0v_1\ldots$ for this ray.

A \emph{double ray} is a graph $(V,E)$ of the form $V=\{v_i:i\in\mathbb{Z}\}$, and $E=\{v_iv_{i+1}:i\in\mathbb{Z}\}$. Removal of any vertex from a double ray decomposes it into two rays. 

An \emph{end} is an equivalence class of rays, where two rays are equivalent if they cannot be separated by the removal of finitely many vertices. In other words, for a graph $G$, two rays $R_1$ and $R_2$ are equivalent, if for every finite set $S\subseteq V(G)$, the graph $G-S$ contains a tail of both $R_1$ and $R_2$ in the same component.

The following basic proposition is an important property of locally finite graphs. The proof can be found in \cite{diestel} (Proposition 8.2.1).
\begin{proposition}
Every locally finite infinite graph contains a ray, hence it has at least one end.
\end{proposition}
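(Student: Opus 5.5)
The natural route is a König-type argument that builds the ray one vertex at a time. Let $G$ be a locally finite, connected, infinite graph; connectedness is our standing assumption. I will construct vertices $v_0,v_1,v_2,\ldots$ inductively, pairwise distinct, with $v_iv_{i+1}\in E(G)$, so that they span a ray. To keep the induction going I maintain the following invariant: after choosing $v_0,\ldots,v_n$, the graph $G-\{v_0,\ldots,v_{n-1}\}$ has a component $C_n$ that contains $v_n$ and is infinite.

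For the start, take any vertex $v_0$ and $C_0=G$, which is infinite and connected. For the inductive step, suppose $v_n$ lies in an infinite component $C_n$ of $G-\{v_0,\ldots,v_{n-1}\}$. Remove $v_n$ from $C_n$. Every component of $C_n-v_n$ contains a neighbor of $v_n$, because $C_n$ is connected. Since $v_n$ has finite degree, $C_n-v_n$ therefore has only finitely many components. Their union is $C_n\setminus\{v_n\}$, which is infinite, so at least one of them, call it $C_{n+1}$, is infinite. Choose $v_{n+1}$ to be a neighbor of $v_n$ in $C_{n+1}$. Then $C_{n+1}$ is a component of $G-\{v_0,\ldots,v_n\}$ containing $v_{n+1}$, so the invariant holds again.

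The vertex $v_{n+1}$ differs from $v_0,\ldots,v_n$ because it lies in $G-\{v_0,\ldots,v_n\}$. Hence the $v_i$ are distinct, consecutive ones are adjacent, and $v_0v_1v_2\ldots$ is a ray. Its equivalence class is an end of $G$, so $G$ has at least one end.

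The only delicate step is the pigeonhole argument: an infinite set that is split into finitely many parts has an infinite part. This is exactly where local finiteness is used. Without it the result fails; for example, an infinite star has no ray.
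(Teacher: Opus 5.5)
Your proof is correct, including the invocation of the paper's standing connectedness assumption, without which the statement would fail. The paper gives no argument of its own and only cites Diestel (Proposition 8.2.1); the standard proof there is essentially your K\"onig-type construction of the ray through nested infinite components.
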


 The infinite ladder (see Figure~\ref{fig:ladder}) has  exactly one end.  It has metric dimension $2$, with resolving set $\{v_1,v_2\}$, as depicted. 
Note that the infinite broken ladder (see Figure~\ref{fig:brokenladder}) also has metric dimension $2$,  with the same resolving set. 

We will show that infinite graphs having more than one end have infinite strong dimension.  The example of the infinite broken ladder intrigued us, being a counterexample to the converse statement:  it has only one end,  but has infinite strong dimension.  
The example of the broken ladder motivated us to try to characterize graphs that have finite metric dimension and infinite strong metric dimension.

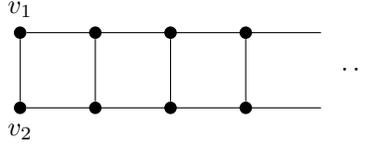
\begin{figure}
    \centering
    \begin{tikzpicture}
        \node[circle, fill=black,scale = 0.5,label = above:$v_1$] (A) at (0,0) {};
        \node[circle, fill=black,scale = 0.5] (B) at (1,0) {};
        \node[circle, fill=black,scale = 0.5] (C) at (2,0) {};
        \node[circle, fill=black,scale = 0.5,label = below:$v_2$] (D) at (0,-1) {};
        \node[circle, fill=black,scale = 0.5] (E) at (1,-1) {};
        \node[circle, fill=black,scale = 0.5] (F) at (2,-1) {};
        \node[circle, fill=black,scale = 0.5] (H) at (3,0) {};
        \node[circle, fill=black,scale = 0.5] (I) at (3,-1) {};

        \node (G) at (4.5,-.5) {$\cdots$};

        \draw (A) -- (4,0);
        \draw (A) -- (D);
        \draw (D) -- (4,-1);
        \draw (C) -- (F);
        \draw (B) -- (E);
        \draw (H) -- (I);
    \end{tikzpicture}
    \caption{Infinite Ladder}
    \label{fig:ladder}
\end{figure}

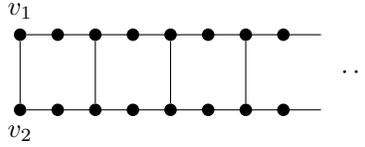
\begin{figure}
    \centering
    \begin{tikzpicture}
        \node[circle, fill=black,scale = 0.5,label = above:$v_1$] (A) at (0,0) {};
        \node[circle, fill=black,scale = 0.5] (B) at (1,0) {};
        \node[circle, fill=black,scale = 0.5] (C) at (2,0) {};
        \node[circle, fill=black,scale = 0.5,label = below:$v_2$] (D) at (0,-1) {};
        \node[circle, fill=black,scale = 0.5] (E) at (1,-1) {};
        \node[circle, fill=black,scale = 0.5] (F) at (2,-1) {};
        \node[circle, fill=black,scale = 0.5] (H) at (3,0) {};
        \node[circle, fill=black,scale = 0.5] (I) at (3,-1) {};

        \node (G) at (4.5,-.5) {$\cdots$};

        \draw (A) -- (4,0);
        \draw (A) -- (D);
        \draw (D) -- (4,-1);
        \draw (C) -- (F);
        \draw (B) -- (E);
        \draw (H) -- (I);

        \foreach \i in {1,...,4}
        {
        \node[circle, fill=black,scale = 0.5] at (\i-0.5,0) {};
        \node[circle, fill=black,scale = 0.5] at (\i-0.5,-1) {};
        }
    \end{tikzpicture}
    \caption{Infinite Broken Ladder}
    \label{fig:brokenladder}
\end{figure}

\section{Graphs with multiple ends}

In the quest to characterize infinite graphs with finite dimension, it is useful to find large classes of graphs that are definitely not candidates. In this section, we show that graphs with more than one end have infinite strong dimension. In fact, we will prove a slightly stronger statement, deducing this theorem as a corollary.

We will use the usual notion of distance of a vertex from a set of vertices: if $W$ is a set of vertices and $v$ is a vertex, then $d(v,W)=\min\{d(w,v):w\in W\}$. If $R$
is a ray, we will write $d(v,R)$ for $d(v,V(R))$. Note that in locally finite graphs, the set of vertices attaining this minimum is finite.

\begin{lemma}
Let $G$ be a graph, and $W$ be a finite set of vertices. Let $S=\{v_1,v_2,\ldots\}\subseteq V(G)$. Then
\[
\lim_{i\to\infty} d(v_i,W)=\infty.
\]
\end{lemma}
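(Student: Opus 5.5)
The plan is to reduce the statement to a finiteness fact about balls in a locally finite graph. The lemma is meant for an infinite set $S$ of distinct vertices $v_1,v_2,\ldots$ in the locally finite connected graph $G$ (the standing assumption of the paper). The limit statement is equivalent to the following: for every $N\in\mathbb{N}$ there is an index $i_0$ such that $d(v_i,W)>N$ for all $i\ge i_0$. Since the $v_i$ are distinct, it suffices to show that only finitely many vertices of $G$ satisfy $d(v,W)\le N$. Once that is known, only finitely many indices $i$ can have $d(v_i,W)\le N$, and taking $i_0$ larger than all of them gives the claim.

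The key step is to show that for each vertex $w$ and each $N$, the ball $B(w,N)=\{v: d(v,w)\le N\}$ is finite. I would prove this by induction on $N$. The base case is $B(w,0)=\{w\}$. For the induction step, note that $B(w,N+1)$ is contained in the union of $B(w,N)$ and the neighbourhoods $N(u)$ for $u\in B(w,N)$: a vertex at distance $N+1$ from $w$ has a neighbour on a shortest path to $w$, and that neighbour is at distance $N$. This is a finite union of finite sets, because $G$ is locally finite. Then
\[
\{v: d(v,W)\le N\}=\bigcup_{w\in W} B(w,N),
\]
which is a finite union of finite sets, since $W$ is finite. Connectedness guarantees that all the distances involved are finite, so $d(v,W)$ is a well-defined natural number.

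I expect no real obstacle. The only care needed is in interpreting the hypotheses: the $v_i$ must be distinct, or at least each vertex may appear only finitely often in the sequence, since a constant sequence would be a counterexample. Local finiteness is also essential: in a star with infinitely many leaves and $W$ the centre, every leaf is at distance $1$ from $W$. I would state these assumptions explicitly at the start of the proof.
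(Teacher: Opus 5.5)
Your proof is correct and follows essentially the same argument as the paper: both rest on the fact that, by local finiteness, only finitely many vertices lie within a given distance of a fixed vertex, so only finitely many of the distinct $v_i$ can be that close to $W$. The differences are cosmetic. The paper shows $d(v_i,w_j)\to\infty$ for each $w_j\in W$ separately and then exchanges the limit with the minimum over the finite set $W$, whereas you bound $\{v : d(v,W)\le N\}$ by the finite union $\bigcup_{w\in W}B(w,N)$ directly; you also supply the induction showing that balls are finite, which the paper only asserts.
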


\begin{proof}
Let $W=\{w_1,\ldots,w_k\}$. Let $j_0\in[k]$, and let $d_i=d_{i,j_0}=d(v_i,w_{j_0})$. Note that the sequence $\{d_i\}$ has the property that for every nonnegative integer $m$, the set $\{i:d_i=m\}$ is finite. As a consequence of local finiteness, the set of all vertices of distance $m$ from $w_{j_0}$ is finite, which implies that $d_i\to\infty$.

Then
\[
\lim_{i\to\infty} d(v_i,W)=\lim_{i\to\infty}\min_{j\in[k]}d(v_i,w_j)=\min_{j\in[k]}\lim_{i\to\infty}d_{i,j}=\infty.
\]
\end{proof}

With this lemma, we are ready to prove the main theorem of this section.

\begin{theorem}\label{thm:3timer}
Let $G$ be a graph. If there are sets $S_1,S_2\subseteq V(G)$ such that $S_1$ and $S_2$ are infinite, but can be separated by a finite set of vertices, then $\beta_S(G)=\infty$.
\end{theorem}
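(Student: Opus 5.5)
The plan is a proof by contradiction: given any finite set $W$, produce a pair $u\in S_1$, $v\in S_2$ that no $w\in W$ strongly resolves. Let $X$ be the finite separating set, so every path from a vertex of $S_1$ to a vertex of $S_2$ meets $X$. Suppose $W$ is a finite strong resolving set. Set
\[
D=\max\{d(w,x): w\in W,\ x\in X\},
\]
which is finite because $W$ and $X$ are finite and $G$ is connected.

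\textbf{Choice of $u$ and $v$.} The sets $S_1\setminus X$ and $S_2\setminus X$ are still infinite. Enumerate each and apply the preceding lemma with the finite set $X$. This gives $u\in S_1\setminus X$ and $v\in S_2\setminus X$ with $d(u,X)>D$ and $d(v,X)>D$. By hypothesis every $u$--$v$ path meets $X$.

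\textbf{Core step.} Fix $w\in W$ and suppose, say, that $v$ lies on a shortest $w$--$u$ path. Then
\[
d(w,u)=d(w,v)+d(v,u).
\]
Take a shortest $v$--$u$ path. It meets $X$ at some vertex $x$, and since it is geodesic, $d(v,u)=d(v,x)+d(x,u)$. By the triangle inequality $d(w,u)\le d(w,x)+d(x,u)$. Combining these and cancelling $d(x,u)$ gives
\[
d(w,v)+d(v,x)\le d(w,x)\le D.
\]
But $d(v,x)\ge d(v,X)>D$, which is a contradiction.

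The other alternative, that $u$ lies on a shortest $w$--$v$ path, is symmetric: swap the roles of $u$ and $v$ and use $d(u,X)>D$. Hence no $w\in W$ strongly resolves $u$ and $v$, so $W$ is not a strong resolving set. Since $W$ was an arbitrary finite set, $\beta_S(G)=\infty$.

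The main point needing care is choosing $u$ and $v$ far from $X$ rather than far from $W$. Distance from $X$ is exactly what forces the detour through $x$ to be strictly longer than the direct route $w\to x$. One should also make sure $u,v\notin X$, so that the separation hypothesis genuinely forces every $u$--$v$ path through $X$.
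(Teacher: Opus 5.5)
Your proof is correct and is essentially the paper's argument: choose $u\in S_1$, $v\in S_2$ far away via the lemma, note that a shortest $u$--$v$ path passes through the separator, and use the triangle inequality to contradict $v$ (or $u$) lying on a shortest path to some $w\in W$. The only difference is bookkeeping. The paper adds the separator to the resolving set and uses the diameter of $W_0\cup W_1$ as its threshold, whereas you measure distance to $X$ alone with threshold $D=\max\{d(w,x): w\in W,\ x\in X\}$, which spares you from enlarging $W$.
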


\begin{proof}
Suppose that $G$ is a graph, $S_1$ and $S_2$ are infinite sets of vertices that are separated by the finite vertex set $W_0$, and suppose for a contradiction that $W_1$ is a finite resolving set. Let $W=W_0\cup W_1$. Clearly, $W$ is a finite resolving set. Let $d(v)=d(v,W)$ for $v\in V(G)$.

Let $m=\max\{d(w,w'):w,w'\in W\}$. By the Lemma, there exist $u\in R_1$ and $v\in S_2$ such that $d(u),d(v)>m$. We claim that $W$ does not resolve $u,v$.

Suppose there is $w\in W$ that resolves $u,v$; without loss of generality, a $u$--$w$ shortest path contains $v$.

First we will prove that $d(u,v)\geq d(u)+d(v)$. To see this, let $P$ be a shortest $u$--$v$ path. Notice that $P$ contains a vertex $w_0\in W$. Then
\[
d(u,v)=d(u,w_0)+d(w_0,v)\geq d(u)+d(v).
\]
Now let $w'\in W$ be such that $d(u,w')=d(u)$ (note that $w'=w_0$ is possible). Then,
\[
d(u,w)\leq d(u,w')+d(w',w)\leq d(u)+m<d(u)+d(v)\leq d(u,v),
\]
contradicting the statement that $v$ is on a shortest $u$--$w$ path.
\end{proof}

\begin{corollary}\label{cor:3timer}
If a graph $G$ has at least two ends, then $\beta_S(G)=\infty$.
\end{corollary}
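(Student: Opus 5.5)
The corollary should follow directly from Theorem~\ref{thm:3timer}: from two distinct ends we need to produce two infinite vertex sets that a finite set of vertices separates.

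Let $\omega_1\neq\omega_2$ be two ends of $G$, and choose rays $R_1\in\omega_1$ and $R_2\in\omega_2$. Since the rays are inequivalent, the definition of an end gives a finite set $S\subseteq V(G)$ such that no component of $G-S$ contains tails of both $R_1$ and $R_2$. Removing the finitely many vertices of $S$ from a ray leaves all but finitely many of its vertices. Hence some tail $T_1$ of $R_1$ avoids $S$. Being connected, $T_1$ lies in a single component $C_1$ of $G-S$. Likewise some tail $T_2$ of $R_2$ lies in a single component $C_2$. By the choice of $S$ we have $C_1\neq C_2$.

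Now put $S_1=V(T_1)$ and $S_2=V(T_2)$. Both sets are infinite, because tails of rays are rays. Every $S_1$--$S_2$ path in $G$ leaves the component $C_1$, so it must pass through $S$. Thus the finite set $S$ separates $S_1$ from $S_2$, and Theorem~\ref{thm:3timer} gives $\beta_S(G)=\infty$.

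The only point needing care is the step from ``cannot be separated by finitely many vertices'' to an explicit finite separating set. Negating the definition of equivalence gives a finite $S$ for which the two rays have no tails in a common component of $G-S$. We must also observe that each ray has a tail lying entirely in one component, which holds because a tail avoiding $S$ is connected. Everything else is routine. Local finiteness is not needed here beyond what Theorem~\ref{thm:3timer} already assumes.
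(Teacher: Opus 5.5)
Your proof is correct and follows the paper's intended route. The paper gives no written proof and treats the corollary as an immediate application of Theorem~\ref{thm:3timer}; you have supplied the routine details. These are negating ray equivalence to obtain a finite separator $S$, and taking as $S_1,S_2$ the vertex sets of tails that avoid $S$ and lie in distinct components of $G-S$.
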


We note that the main theorem is not much stronger than the corollary, as the corollary could be used to prove the theorem. To see this, consider the infinite sets $S_1$ and $S_2$ that can be separated by a finite set of vertices. After removing the separator, the graph falls into components, at least two of which are infinite. They both contain a ray and these are not in the same end of the original graph, implying the theorem.

Also note that Theorem~\ref{thm:3timer} and Corollary~\ref{cor:3timer} fail to hold for weak metric dimension. A counterexample is the $k$-spider: the union of $k$ disjoint rays, with $k\ge 3$, having a common initial vertex,  whose metric dimension is $k-1$.  See Figure~\ref{fig:kspider}.

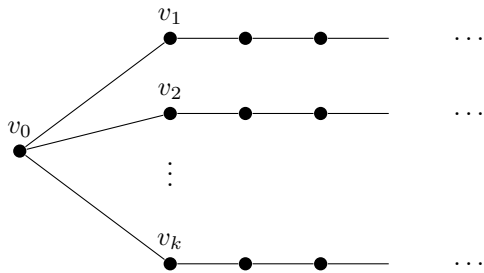
\begin{figure}
\begin{tikzpicture}[on grid,every node/.style={circle, fill, minimum size=5pt, inner sep=0pt}]
\node (C) [label=$v_0$] {};
\node (A1) [right=of C, xshift=1cm, yshift=1.5cm, label=$v_1$] {}; 
\node (A2) [right=of A1] {};
\node (A3) [right=of A2] {};
\node (A4) [right=of A3, fill=none, draw=none] {}; 
\node [right=of A4, fill=none, draw=none] {\ldots}; 
\node (B1) [right=of C, xshift=1cm, yshift=0.5cm, label=$v_2$] {};
\node (B2) [right=of B1,] {};
\node (B3) [right=of B2,] {};
\node (B4) [right=of B3, fill=none, draw=none] {}; 
\node [right=of B4, fill=none, draw=none] {\ldots}; 
\node [right=of C, xshift=1cm, yshift=-0.2cm, fill=none, draw=none] {\vdots}; 
\node (C1) [right=of C, xshift=1cm, yshift=-1.5cm, label=$v_k$] {}; 
\node (C2) [right=of C1] {}; 
\node (C3) [right=of C2] {};
\node (C4) [right=of C3, fill=none, draw=none] {};  
\node [right=of C4, fill=none, draw=none] {\ldots};
\draw (C) -- (A1) -- (A2) -- (A3) -- (A4); 
\draw (C) -- (B1) -- (B2) -- (B3) -- (B4); 
\draw (C) -- (C1) -- (C2) -- (C3) -- (C4); 
\end{tikzpicture}
\caption{$k$-spider}
\label{fig:kspider}
\end{figure}

\section{Finicyclic graphs}

In this section we characterize the class of infinite trees with finite strong dimension by providing a stronger characterization for graphs with finitely many cycles, which we call \emph{finicyclic}.

Some of the special cases discussed here were proven by C\'aceres et al.~\cite{Caceres}. We point out their results, but this paper is self-contained.

\begin{lemma}\label{lem:cyclecreator}
Let $G$ be a graph, $a,b\in V(G)$, and let $P$ be an $a$--$b$ path. Let $v$ be an internal vertex $v\in V(P)$, and let $v^-$, $v^+$ be the vertices directly preceding and directly following $v$ on $P$, respectively. If there is
an $a$--$b$ path $Q$ such that $v\not\in V(Q)$, then there is a cycle in $G$ that contains $v$,$v^-$, and $v^+$.
\end{lemma}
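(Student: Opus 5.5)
Write $P = a\,P_1\,v^-\,v\,v^+\,P_2\,b$, where $P_1 = aPv^-$ is the initial segment of $P$ up to $v^-$ and $P_2 = v^+Pb$ is the final segment from $v^+$. Since $P$ is a path, $V(P_1)$ and $V(P_2)$ are disjoint and neither contains $v$. The plan is to connect $P_1$ to $P_2$ by a subpath of $Q$ that avoids the rest of $P$. Closing this connection up through $v^-vv^+$ gives the desired cycle.

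\textbf{Construction.} The path $Q$ starts at $a\in V(P_1)$ and ends at $b\in V(P_2)$, and it never visits $v$. Let $y$ be the first vertex of $Q$ lying in $V(P_2)$; it exists because $b\in V(P_2)$. Let $x$ be the last vertex of $Q$ before $y$ that lies in $V(P_1)$; it exists because $a$ precedes $y$ on $Q$ and $a\in V(P_1)$. Consider the subpath $xQy$ of $Q$. By the choice of $x$ and $y$, its internal vertices avoid both $V(P_1)$ and $V(P_2)$. They also avoid $v$, since $v\notin V(Q)$. Hence the internal vertices of $xQy$ avoid all of $V(P)=V(P_1)\cup\{v\}\cup V(P_2)$.

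\textbf{The cycle.} Take the closed walk
\[
x\,Q\,y \;\longrightarrow\; y\,P\,v^+ \;\longrightarrow\; v^+\,v\,v^- \;\longrightarrow\; v^-\,P\,x,
\]
where $yPv^+$ runs backwards along $P_2$ and $v^-Px$ runs backwards along $P_1$. The four pieces are internally disjoint for the following reasons:
\begin{itemize}
\item $yPv^+\subseteq P_2$ and $v^-Px\subseteq P_1$, and these two sets are disjoint;
\item $xQy$ meets $P$ only at its endpoints $x$ and $y$;
\item $v$ lies on neither $Q$ nor $P_1\cup P_2$.
\end{itemize}
So the walk is a cycle, and it contains $v^-$, $v$ and $v^+$. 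Its length is at least $3$ because it contains the three distinct vertices $v^-,v,v^+$, so the degenerate cases $x=v^-$ or $y=v^+$ cause no trouble.

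\textbf{Main care point.} The only delicate step is choosing $x$ and $y$ in the right order: first $y$ on $Q$, then $x$ as the last $P_1$-vertex before $y$. This order is what guarantees that the internal vertices of $xQy$ avoid all of $P$. Everything else is routine bookkeeping of disjointness.
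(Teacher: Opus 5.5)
Your proof is correct and uses the same idea as the paper: close a segment of $P$ through $v^-vv^+$ into a cycle using a segment of $Q$. The only difference is how the attachment points are chosen.

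The paper moves along $P$ outward from $v$ and takes $u^-$, $u^+$ to be the nearest vertices of $Q$ on either side. The interior of $u^-Pu^+$ then misses $Q$, while every vertex of $u^+Qu^-$ lies on $Q$, so $u^-Pu^+Qu^-$ is a cycle at once. This takes one selection step and needs no control over how $Q$ wanders elsewhere.

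You instead select along $Q$: first the first hit $y$ of $P_2$, then the last hit $x$ of $P_1$ before $y$. This makes the interior of the $Q$-segment avoid $P$, rather than the interior of the $P$-segment avoid $Q$. That is why you need the two-step ordering you flagged, but your argument is equally valid.
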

\begin{figure}
    \centering
    \begin{tikzpicture}
        \node[circle, fill=black,scale = 0.5,label = left:$a$] (A) at (0,0) {};
        \node[circle, fill=black,scale = 0.5,label = right:$b$] (B) at (8,0) {};
        \node[circle, fill=black,scale = 0.5,label = above:$v^-$] (C) at (3,0) {};
        \node[circle, fill=black,scale = 0.5,label = above:$v$] (D) at (4,0) {};
        \node[circle, fill=black,scale = 0.5,label = above:$v^+$] (E) at (5,0) {};
        \node[circle, fill=black,scale = 0.5,label = above:$u^-$] (H) at (1,0) {};
        \node[circle, fill=black,scale = 0.5] at (1.66,0) {};
        \node[circle, fill=black,scale = 0.5] at (2.33,0) {};
        \node[circle, fill=black,scale = 0.5] at (0.5,0) {};
        \node[circle, fill=black,scale = 0.5,label = above:$u^+$] (I) at (6,0) {};
        \node[circle, fill=black,scale = 0.5] at (6.66,0) {};
        \node[circle, fill=black,scale = 0.5] at (7.33,0) {};

        \draw (A) to (B);

        \draw (A) to [out = -50, in=-130] (H);
        \draw (H) to [out = -25, in=-155] (I);
        \draw (I) to [out = -40, in=-140] (B);
    \end{tikzpicture}
    \caption{$P$ and $Q$}
    \label{fig:pathp}
\end{figure}
\begin{proof}
Let $P$ be and $Q$ be as stated.  Let $u^-$ be the nearest vertex preceding $v$ on $P$ that is also on $Q$, and let $u^+$ be the nearest vertex following $v$ that is also on $Q$. Note that $u^-\in \{a, v^-\}$ or $u^+\in \{b, v^+\}$ are possible. Let $C$ be
\[
u^-Pu^+Qu^-.
\]
Note  that $C$ contains no repeated vertex, and hence is a cycle, which contains $v^-$, $v$, $v^+$ in this order on the $u^-Pu^+$ portion.
\end{proof}

\subsection{Characterization of locally finite finicyclic graphs}

A finicyclic graph can be made into a forest by the removal of finitely many vertices, since only one vertex must be removed from each cycle. The converse, that a graph is finicyclic provided the removal of some finite set of vertices renders it a forest, is also true but more difficult to prove since it is possible to construct a graph with a single vertex shared by infinitely many cycles. In fact, even if such a graph is locally finite, it is possible that a vertex can be contained in infinitely many cycles;  the infinite ladder has this property, for example.  However, as we shall see in the proof of the next theorem, it is still possible to find a finite set that destroys all cycles. Aharoni and Berger \cite{AhBer} called these type of statements ``complementary slackness conditions''.

This theorem is not essential for the second part of the section, but we found it independently interesting.

\begin{theorem}\label{thm:char}
Let $G$ be a locally finite graph. $G$ is finicyclic if and only if there is a finite set of vertices $U$ such that $G-U$ is a forest.
\end{theorem}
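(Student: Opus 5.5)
The forward direction is immediate, as noted before the statement. If $G$ has only finitely many cycles $C_1,\ldots,C_n$, pick one vertex $u_i\in V(C_i)$ from each and let $U=\{u_1,\ldots,u_n\}$. Then every cycle of $G$ meets $U$, so $G-U$ contains no cycle and is a forest.

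For the converse, suppose $U$ is finite and $F=G-U$ is a forest. The plan is to show that every cycle of $G$ is assembled from a fixed finite stock of pieces, each used at most once, so there can only be finitely many cycles.

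First I use local finiteness. Since $U$ is finite and each vertex has finite degree, the set $E_U$ of edges incident to $U$ is finite. Hence the set $N\subseteq V(F)$ of vertices of $F$ that have a neighbour in $U$ is also finite.

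Now take any cycle $C$ of $G$.
\begin{itemize}
\item $C$ must meet $U$, because $F$ is acyclic.
\item Delete the vertices of $U$ from $C$. What remains is a disjoint union of paths $Q_1,\ldots,Q_r$ lying in $F$.
\item Each $Q_j$ is entered and left through edges of $E_U$, so both endpoints of $Q_j$ lie in $N$. It is possible that $Q_j$ is a single vertex of $N$.
\item Both endpoints of $Q_j$ lie in the same component of the forest $F$. A forest has a unique path between any two of its vertices, so $Q_j$ is the unique $F$-path between its endpoints.
\end{itemize}
Thus each $Q_j$ belongs to the finite set $\mathcal{P}=\{xFy : x,y\in N \text{ in the same component of } F\}$, which has at most $|N|^2$ members.

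Consequently, $C$ is completely determined by two pieces of data:
\begin{itemize}
\item the set of edges of $C$ lying in $E_U$, which is a subset of a finite set;
\item the set of $F$-paths $Q_j$, which is a subset of $\mathcal{P}$.
\end{itemize}
Indeed, $C$ is exactly the union of these edges and paths. No piece can be used twice, since a cycle repeats no vertex. So the number of cycles is at most $2^{|E_U|}\cdot 2^{|\mathcal{P}|}$, which is finite. Hence $G$ is finicyclic.

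The only delicate point is the segment decomposition, in particular justifying that each maximal $F$-segment of a cycle has both endpoints in $N$ and is forced by those endpoints. Uniqueness of paths in a forest handles this directly. Local finiteness is used only to make $E_U$, and hence $N$ and $\mathcal{P}$, finite. This is where the hypothesis is essential: without it, a single vertex of $U$ could have infinitely many neighbours in a tree and lie on infinitely many cycles.
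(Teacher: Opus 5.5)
Your proof is correct, but it takes a different route from the paper's. The paper reduces the converse to a one-vertex step: if $G-v$ is finicyclic then so is $G$, and the vertices of $U$ are added back one at a time. For that step it counts cycles through $v$ using the finitely many pairs of neighbours $u_1,u_2$ of $v$ and the $u_2$--$u_1$ paths in $G-v$. Finiteness of those paths comes from Corollary~\ref{cor:finpath}, and hence ultimately from Lemmas~\ref{lem:cyclecreator} and~\ref{lem:infp-infc}.

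You instead treat all of $U$ at once. Cutting a cycle along $U$ leaves edges of the finite set $E_U$ and segments in the forest $F$. Uniqueness of paths in a forest determines each segment by its two endpoints in the finite set $N$, so cycles inject into a finite set. Your argument is self-contained and more elementary, since it needs none of the paper's path-counting lemmas. It also gives an explicit bound $2^{|E_U|+|\mathcal{P}|}$ on the number of cycles. Both arguments use local finiteness only at the vertices of $U$.

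The paper's route buys modularity and a slightly more general fact. Attaching a vertex of finite degree to an arbitrary finicyclic graph, not just a forest, keeps it finicyclic, and this goes through a corollary that holds without local finiteness. Your decomposition would in fact reach that generality too. Cycles avoiding $U$ then lie in the finicyclic graph $F$, and Corollary~\ref{cor:finpath} replaces uniqueness of forest paths in making $\mathcal{P}$ finite.
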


Before the proof of the theorem, we will prove a lemma and state a corollary.

\begin{lemma}\label{lem:infp-infc}
Let $G$ be a graph, and $a,b\in V(G)$. If there are infinitely many $a$--$b$ paths, then there are infinitely many cycles in $G$.
\end{lemma}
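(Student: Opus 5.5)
We argue by contraposition: assuming $G$ has only finitely many cycles, we show that there are only finitely many $a$--$b$ paths. The tool is Lemma~\ref{lem:cyclecreator}, which turns two $a$--$b$ paths that differ at an internal vertex into a cycle through that vertex and its two path-neighbours.

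\textbf{Step 1: compare every path with a fixed one.} Fix an $a$--$b$ path $P$ (if there is none, there is nothing to prove). Let $Q$ be any other $a$--$b$ path. Then $Q\neq P$, so some edge of $Q$ is not an edge of $P$.

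\textbf{Step 2: every edge of $Q$ outside $P$ lies on a cycle.} Let $e=xy$ be an edge of $Q$ not in $P$. Walk along $Q$ from $a$ and let $x'$ be the last vertex of $P$ met before $e$. Let $y'$ be the first vertex of $P$ met after $e$. Both exist, since $a,b\in V(P)$. The segment $x'Qy'$ is internally disjoint from $P$. It is not a single edge of $P$, since it either has length at least $2$ or is the edge $e\notin E(P)$. Hence $x'Qy'$ together with $x'Py'$ forms a cycle containing $e$. This is the same splicing idea as in the proof of Lemma~\ref{lem:cyclecreator}, applied directly.

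\textbf{Step 3: only finitely many edges are available.} Let $F$ be the union of the edge sets of all cycles of $G$. Since there are finitely many cycles and each is finite, $F$ is finite. By Step 2, every $a$--$b$ path uses only edges from the finite set $E(P)\cup F$. Finitely many edges span only finitely many paths, so there are finitely many $a$--$b$ paths. This is the contrapositive of the lemma.

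The main obstacle is the degenerate cases in Step 2: the segment $x'Qy'$ might coincide with a sub-path of $P$, or we might have $x'=y'$. The second case cannot occur, because $Q$ is a path and visits no vertex twice. The first case is excluded because $e$ lies on the segment but not on $P$. Note that local finiteness is not needed for this argument.
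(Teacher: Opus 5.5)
Your proof is correct and is essentially the paper's argument: both splice two $a$--$b$ paths into a cycle, which forces the union of all $a$--$b$ paths into a finite set once there are only finitely many cycles. The paper works with vertices (a vertex of the union $H$ lying neither on every $a$--$b$ path nor on any cycle contradicts Lemma~\ref{lem:cyclecreator}), whereas you work with edges measured against a fixed path $P$ and carry out the splicing directly; this works equally well and likewise needs no local finiteness.
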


\begin{proof}Let $H$ be the subgraph of $G$ induced by the vertices of the set of $a$--$b$ paths. We note that $|V(H)|=\infty$. Suppose that there only finitely many cycles in $H$. Then the set of vertices $U$ that appear in a cycle of $H$ is finite. Let $W$ be the set of vertices that are on every $a$--$b$ path, and note that it is also a finite set. Hence there is a vertex $v\in V(H)$ such that $v\not\in U\cup W$.

Let $P$ be a $a$--$b$ path that contains $v$ and let $Q$ be an $a$--$b$ path that does not contain $v$. By Lemma~\ref{lem:cyclecreator}, there is a cycle in $H$ that contains $v$, a contradiction.
\end{proof}

\begin{corollary}\label{cor:finpath}
If $G$ is a finicyclic graph, then for every pair of vertices $a,b\in V(G)$, there are finitely many $a$--$b$ paths.
\end{corollary}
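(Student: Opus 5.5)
The corollary follows from Lemma~\ref{lem:infp-infc} by contraposition, so the plan is to unwind that. Let $G$ be finicyclic, meaning $G$ has only finitely many cycles, and fix arbitrary vertices $a,b\in V(G)$. Suppose, aiming for a contradiction, that there are infinitely many $a$--$b$ paths in $G$.

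Lemma~\ref{lem:infp-infc} is stated for an arbitrary graph $G$ and arbitrary vertices $a,b$. Its only hypothesis is that there are infinitely many $a$--$b$ paths. In particular, it does not require local finiteness or any other structural assumption. Applying it directly, $G$ contains infinitely many cycles. This contradicts the assumption that $G$ is finicyclic. Hence the number of $a$--$b$ paths is finite, and since $a$ and $b$ were arbitrary, the statement holds for every pair.

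The only care needed is the degenerate case $a=b$. Here the unique $a$--$a$ path is the trivial one-vertex path, so the count is trivially finite. Equivalently, the lemma's hypothesis can never be met in this case, so it adds nothing.

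There is no real obstacle: all the work is in Lemma~\ref{lem:infp-infc}, which we are allowed to assume. The one thing to confirm is that its cycles are cycles of $G$. The lemma produces them in the induced subgraph $H$, and every cycle of a subgraph of $G$ is a cycle of $G$, so the contradiction with finicyclicity of $G$ goes through.
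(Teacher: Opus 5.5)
Your proposal is correct and matches the paper's approach: the paper gives no separate proof of this corollary, treating it as the immediate contrapositive of Lemma~\ref{lem:infp-infc}, exactly as you do. Your side remarks, on the trivial case $a=b$ and on cycles of the subgraph $H$ being cycles of $G$, are valid sanity checks.
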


We note that Lemma~\ref{lem:cyclecreator}, Lemma~\ref{lem:infp-infc}, and Corollary~\ref{cor:finpath} do not require local finiteness.

We are ready to prove the characterization theorem.

\begin{proof}[Proof of Theorem~\ref{thm:char}]
It is sufficient to show that if $G-v$ is finicyclic for $v\in V(G)$, then $G$ is finicyclic. Indeed, adding the vertices of $U$ to $G-U$ one-by-one will preserve the property of being finicyclic.

In turn, it is sufficient to show that there are finitely many cycles of $G$ containing $v$. Every cycle of $G$ that contains $v$ will use a pair of edges incident to $v$, of which there are finitely many, due to local finiteness. So it is sufficient to show that for two given neighbors $u_1$, $u_2$ of $v$, only finitely many cycles contain these three vertices in the order $u_1 v u_2$.

Every such cycle is of the form $u_1 v u_2 P u_1$, where $P$ is a $u_2$--$u_1$ path in $G-v$. By Corollary~\ref{cor:finpath}, there are finitely many such paths, and hence the theorem is proven.
\end{proof}

\subsection{Metric dimension of finicyclic graphs}
In this subsection we return to our original goal of characterizing infinite finicyclic graphs with finite weak metric dimension and finite strong metric dimension. The number of \emph{branch vertices}, which are vertices of degree at least $3$, plays a crucial role in these characterization theorems.

We start by proving a technical lemma, which will be used in the two theorems of this section.

\begin{lemma}\label{lem:partition}
Let $G$ be a finicyclic graph with finitely many branch vertices. Then there is a finite set of vertices $W$ such that $G-W$ has finitely many components, each of which is
\begin{itemize}
\item a ray;
\item a path;
\item or a double ray.
\end{itemize}
Furthermore, if there is a double ray, then that is the only component.
\end{lemma}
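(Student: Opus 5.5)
Our plan is to let $W$ consist of every vertex lying on a cycle together with every branch vertex, and then to refine $W$ in the one degenerate case. Since $G$ is finicyclic, only finitely many vertices lie on cycles. By hypothesis there are finitely many branch vertices. So this $W$ is finite. (If $G$ is finite, the statement is trivial: take $W=V(G)$ and there are no components at all. So we assume $G$ is infinite.)

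First we identify the components of $G-W$. Every vertex of $G-W$ has degree at most $2$ in $G$, so it also has degree at most $2$ in $G-W$. Moreover, $G-W$ is acyclic, because every cycle of $G$ meets $W$. A connected acyclic graph with maximum degree at most $2$ is a path, a ray, or a double ray; this holds in the locally finite countable setting as well. So each component of $G-W$ is one of the three allowed types.

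Next we show there are finitely many components. Each component $C$ of $G-W$ is joined to $W$ by at least one edge, since $G$ is connected and $W\neq\emptyset$ (the case $W=\emptyset$ is treated below). The vertices of $W$ have finite degree and $W$ is finite, so there are only finitely many edges between $W$ and $G-W$. Different components use different such edges, so there are finitely many components.

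The remaining issue is the last clause about double rays, which we expect to be the main point. Suppose a component $D$ of $G-W$ is a double ray, and suppose first that $W\ne\emptyset$. Some vertex $x$ of $D$ has a neighbor in $W$. Then $\deg_G(x)\ge 3$, because $x$ already has two neighbors on $D$. So $x$ is a branch vertex, which contradicts $x\notin W$. Hence if $W\neq\emptyset$, no component is a double ray.

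If $W=\emptyset$, then $G$ is connected, acyclic, and has maximum degree at most $2$. So $G$ itself is a path, a ray, or a double ray, and $G-W=G$ is a single component. This gives the final statement: a double ray can only occur when it is the whole graph, and then it is the only component.

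Note that the hypothesis that $G$ is locally finite is what makes the set of boundary edges finite. The only delicate point is the observation that a vertex of a double-ray component adjacent to $W$ is forced to have degree at least $3$, and so lies in $W$.
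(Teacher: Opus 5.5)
Your proof is correct and follows essentially the same route as the paper: remove the branch vertices together with vertices meeting every cycle, note that the remaining components are acyclic of maximum degree at most $2$, and use the degree count (a double-ray vertex adjacent to $W$ would be a branch vertex) to show a double ray must be all of $G$. The differences are minor: you remove all cycle vertices rather than one per cycle, you prove finiteness of the components by counting the finitely many edges leaving $W$ where the paper just cites it, and you handle $W=\emptyset$ explicitly.
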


\begin{proof}
Let $G$ be a finicyclic graph with finitely many branch vertices. Let $W_1$ be a set of vertices constructed by including a vertex from each cycle, and let $W_2$ the set of branch vertices. Let $W=W_1\cup W_2$. Then $G'=G-W$ is a collection of components of maximum degree at most 2, and $G'$ is cycle-free, so the components of $G'$ are paths, rays, and double rays. Notice that $G'$ has finitely many components: indeed, the removal of a finite set of vertices from a locally finite connected graph can not result in infinitely many components.

It remains to be shown that there may only be one double ray component, and if there is one, there are no other components. Notice that if there is a double ray $D$ component, then $D$ is a double ray in $G$, and for all $v\in V(D)$, we have $\deg_G(v)=2$. This means $D$ is a component of $G$, and being connected, it means $D=G$.
\end{proof}

Slater \cite{Slater},  and Harary and Melter \cite{Harary}, and later Chartrand et al. \cite{Chartrand} (using a new method) characterized the metric dimension of a finite tree:

\begin{theorem}\label{thm:ftreeweak}
If $T$ is a tree that is not a path then 
\[ \beta(T) = \sigma(T) - \mbox{ex}\,(T),\]
where $\sigma(T)$ is the number of leaves of $T$,  and $\mbox{ex}\,(T)$  counts the number of branch vertices of $T$ having at least one leaf closer to it than to any other branch vertex.
\end{theorem}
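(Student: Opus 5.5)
We prove the classical formula $\beta(T)=\sigma(T)-\mathrm{ex}(T)$ by establishing matching lower and upper bounds. The first step is to organize $T$ around its \emph{legs}. For each leaf $\ell$, walk from $\ell$ toward the tree until the first branch vertex $b$ (a vertex of degree at least $3$); such a vertex exists because $T$ is not a path. We call $b$ the \emph{major vertex} of $\ell$, and the path from $b$ to $\ell$, with $b$ excluded, a \emph{leg} of $b$. A branch vertex is \emph{exterior} if it has at least one leg. Two remarks connect this to the statement. First, $\mathrm{ex}(T)$ is exactly the number of exterior major vertices, since a leaf is closer to its own major vertex than to any other branch vertex. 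Second, the legs partition the leaves, so $\sigma(T)=\sum_b \ell(b)$, where $\ell(b)$ is the number of legs at the exterior vertex $b$. The target therefore becomes
\[
\beta(T)=\sum_{b\ \text{exterior}} \bigl(\ell(b)-1\bigr).
\]

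\textbf{Lower bound.} We claim that any resolving set $W$ meets all but at most one leg of each exterior vertex $b$. Suppose two legs $L_1,L_2$ of $b$ both avoid $W$, and let $x_1\in L_1$, $x_2\in L_2$ be the neighbors of $b$ on these legs. For any $w\notin L_1\cup L_2$, the shortest path from $x_i$ to $w$ passes through $b$, so $d(x_1,w)=1+d(b,w)=d(x_2,w)$. Thus no vertex of $W$ resolves $x_1$ and $x_2$, a contradiction. Summing over exterior vertices gives $|W|\ge \sigma(T)-\mathrm{ex}(T)$.

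\textbf{Upper bound.} For each exterior vertex $b$, choose all but one of its legs and place the leaf of each chosen leg into $W$; then $|W|=\sigma(T)-\mathrm{ex}(T)$. The main obstacle is proving that this $W$ resolves $T$. We argue by contradiction. Take $u\ne v$ with $d(u,w)=d(v,w)$ for all $w\in W$, and let $P$ be the $u$--$v$ path.

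Consider any $w\in W$, and let $p(w)$ be the vertex of $P$ nearest to $w$. Equidistance forces $d(u,p(w))=d(v,p(w))$, so $p(w)$ is the midpoint $m$ of $P$; in particular $P$ has even length. Hence every $w\in W$ attaches to $P$ at $m$, through branches of $T$ at $m$ that are disjoint from $P$.

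Now consider the side of $m$ containing $u$, that is, the component of $T-m$ containing $u$. This component contains no element of $W$. It still contains a leaf: extend $P$ beyond $u$, away from $m$, until reaching a leaf $\ell$.

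Let $b$ be the major vertex of $\ell$, and ask whether $b$ lies strictly inside the $u$-side. If it does, then all legs of $b$ lie in the $u$-side, and at least one of them carries a $W$-leaf, a contradiction. Otherwise $b=m$, or the leg of $\ell$ passes through $m$. The latter is impossible because $m$ has degree at least $2$ inside a leg-path position, with $P$ continuing past it on both sides, except in a degenerate configuration that still forces $b$ to be $m$ or to lie beyond it.

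So $b=m$, and the same reasoning applies on the $v$-side. Then $m$ has legs on both the $u$-side and the $v$-side, and at most one of these can be the omitted leg. Since every $W$-leaf lies in a branch at $m$ other than the two branches containing $P$, the leg on the other side would have to contain a $W$-leaf, a contradiction.

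The delicate point is the case analysis in this last step, namely locating the major vertex relative to $m$ and handling legs that $P$ enters partially. We expect to handle it by choosing $\ell$ as far as possible from $m$ in the $u$-component, and by using that a leg has internal degree $2$.
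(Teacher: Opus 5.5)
The paper does not prove this theorem; it only cites Slater, Harary--Melter and Chartrand et al. Your overall strategy is the standard one. Several parts are correct: the reduction to $\sum_b(\ell(b)-1)$ over exterior vertices, the lower bound via the two neighbors of $b$ on two legs avoiding $W$, and the midpoint step. That step shows every $w\in W$ hangs off $P$ at its midpoint $m$, so $C_u$ and $C_v$, the components of $T-m$ containing $u$ and $v$, contain no vertex of $W$.

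The gap is in how you then find a vertex of $W$ inside $C_u$. You take a leaf $\ell$ of $C_u$ and its major vertex $b$. You then claim that if $b\in C_u$, one of the legs of $b$ carries a $W$-leaf. That holds only if $b$ has at least two legs; an exterior vertex with a single leg contributes no leaf to $W$.

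Your proposed remedy, taking $\ell$ farthest from $m$, does not help. Let the neighbor of $m$ in $C_u$ be a degree-$3$ vertex $b$ with one long leg ending at $\ell$ and one child $c$ of degree $3$ carrying two pendant leaves. Then $\ell$ is the farthest leaf and its major vertex $b$ has exactly one leg. The $W$-leaf in $C_u$ sits under $c$, not on a leg of $b$, so your argument derives no contradiction here.

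The right choice is a branch vertex $b\in C_u$ at \emph{maximum distance from $m$}. Every component of $T-b$ not containing $m$ lies in $C_u$ and contains no branch vertex. Each such component is therefore a path from a neighbor of $b$ to a leaf, i.e.\ a leg of $b$. There are $\deg(b)-1\ge 2$ of these legs, so $W\cap C_u\neq\emptyset$.

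Your treatment of the remaining case (``the leg of $\ell$ passes through $m$ \ldots except in a degenerate configuration'') is not an argument. That case means $\deg(m)=2$, so $T-m=C_u\cup C_v$ and hence $W=\emptyset$. Nothing you wrote excludes this, since you never show $W\neq\emptyset$.

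A clean finish is as follows. If $C_u$ or $C_v$ contains a branch vertex, apply the deepest-branch-vertex argument above. Otherwise both are branch-free paths ending in leaves, so $\deg(m)\ge 3$ because $T$ is not a path. Then $C_u$ and $C_v$ are two distinct legs of $m$, at most one of which is the omitted leg, giving a $W$-leaf in $C_u\cup C_v$ and the desired contradiction. With these two repairs the upper bound, and hence the proof, is complete.
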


Seb\H{o} and Tannier \cite{Sebo} characterized the strong dimension of a finite tree:

\begin{theorem} \label{ftreestrong}
If $T$ is a non-trivial tree then
\[ \beta_{S}(T) = \sigma(T) -1.\]
Moreover, any set of $\sigma(T)-1$  leaves form a strong resolving set for $T$.
\end{theorem}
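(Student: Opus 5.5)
We prove the two parts of the statement separately: the lower bound $\beta_S(T)\ge\sigma(T)-1$, and the fact that any $\sigma(T)-1$ leaves strongly resolve $T$, which also gives the matching upper bound. The trivial case to set aside first is $T$ a path, where $\sigma(T)=2$. Here one endpoint $w$ strongly resolves every pair $u,v$, because the unique $w$--$u$ path contains $v$ or vice versa. This is consistent with the formula, so from now on every claim is made for a general nontrivial tree.

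For the lower bound, we use the key observation that in a tree a vertex $w$ strongly resolves $u,v$ exactly when $u$ lies on the unique $w$--$v$ path or $v$ lies on the unique $w$--$u$ path. Now take two distinct leaves $x,y$. A path from $w$ passes \emph{through} a leaf only if that leaf is its endpoint. So $w$ strongly resolves $x,y$ only if $w=x$ or $w=y$. Hence a strong resolving set must contain at least one leaf from every pair of leaves, i.e.\ it misses at most one leaf. This gives $\beta_S(T)\ge\sigma(T)-1$.

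For the upper bound, let $L$ be all leaves except one leaf $\ell$, and let $u\neq v$ be arbitrary vertices. The goal is a leaf $w\in L$ whose path to one of $u,v$ passes through the other. Root $T$ at $\ell$.
\begin{itemize}
\item Suppose $v$ is not an ancestor of $u$ and $u$ is not an ancestor of $v$. The subtree below $u$ contains a leaf $w\ne\ell$ (if $u$ is itself a leaf other than $\ell$, take $w=u$). The path from $w$ up to $v$ passes through $u$, since it climbs to the common ancestor of $u$ and $v$. Note that $u\ne\ell$ here, because $\ell$ is the root and hence an ancestor of everything.
\item Suppose $u$ is a proper ancestor of $v$. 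Pick a leaf $w$ in the subtree of $v$, which is not $\ell$ since $v\neq\ell$. Then the path from $w$ to $u$ passes through $v$.
\end{itemize}

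The only delicate point is making sure the chosen leaf is never the omitted leaf $\ell$. Rooting at $\ell$ handles this, because every subtree below a non-root vertex avoids $\ell$. Combining the two bounds gives $\beta_S(T)=\sigma(T)-1$ together with the "moreover" clause.
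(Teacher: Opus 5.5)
Your proof is correct. The paper does not prove this theorem; it quotes it from Seb\H{o} and Tannier \cite{Sebo}. So there is no in-paper argument to compare against, and your write-up stands as a complete self-contained verification.

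The lower bound is the standard observation: a leaf is never an interior vertex of a path, so two leaves can only be strongly resolved by one of themselves. For the upper bound, rooting $T$ at the omitted leaf $\ell$ is a clean device, since it guarantees that the leaf you pick below a non-root vertex is never $\ell$. Two small points on presentation. The case where $v$ is a proper ancestor of $u$ is covered by symmetry, and it is worth saying so. Your separate treatment of paths is unnecessary, because the general argument already handles them.

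The other common route uses the characterization of Oellermann and Peters-Fransen \cite{Peters-Fransen} mentioned in the introduction. It says that strong resolving sets are exactly the vertex covers of the graph whose edges join mutually maximally distant pairs. In a tree these pairs are precisely the pairs of distinct leaves, since a non-leaf always has a neighbor farther from any other vertex. That graph is therefore a clique on the leaves plus isolated vertices, which yields both $\beta_S(T)=\sigma(T)-1$ and the ``moreover'' clause at once. That route is shorter once the general equivalence is available, whereas yours needs nothing beyond the tree structure.

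You should state explicitly that $T$ is finite. You use this exactly when you pick a leaf in the subtree below a non-root vertex. The hypothesis is genuinely needed: for a ray, $\sigma-1=0$ while $\beta_S=1$, so both the formula and the ``moreover'' clause fail for infinite trees.
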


In the rest of the section, we prove an infinite analogue of Theorem~\ref{thm:ftreeweak} (Theorem~\ref{thm:inftree}) and of Theorem~\ref{ftreestrong} (Theorem~\ref{thm:fintree}).

A special case of Theorem~\ref{thm:inftree} for trees was proven in \cite{Caceres}. In fact, for one of the directions, these authors proved the following stronger proposition. We include the proof for completeness and clarity.

\begin{proposition}\label{prop:caceres}
Let $G$ be a locally finite graph. If $G$ has finitely many branch vertices, then $\beta(G)<\infty$.
\end{proposition}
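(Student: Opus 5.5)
The plan is to build an explicit finite resolving set. It consists of the branch vertices, every vertex of the finite pieces left after deleting them, and one ``anchor'' vertex for each infinite piece. The graph is forced to be essentially geodesic along the infinite pieces, and distances to the anchors then separate everything.

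Let $B$ be the (finite) set of branch vertices. If $B=\emptyset$, every vertex has degree at most $2$. Then the connected graph $G$ is a path, a cycle, a ray or a double ray, and in each case two suitably chosen vertices resolve it. For example, an end vertex works for a path or a ray, and two adjacent vertices work for a cycle or a double ray. So $\beta(G)\le 2$.

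Assume $B\neq\emptyset$. By local finiteness and connectivity, $G-B$ has finitely many components, as in the proof of Lemma~\ref{lem:partition}. Each component has maximum degree at most $2$, so it is a path, cycle, ray or double ray. No component is a cycle or a double ray. Such a component $C$ would have every vertex of degree $2$ inside $C$. Since $G$ is connected and $B\ne\emptyset$, some vertex of $C$ has a neighbour in $B$. That vertex would then have degree at least $3$ in $G$, so it would lie in $B$, a contradiction.

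For a ray component $R=r_0r_1r_2\ldots$, each $r_i$ with $i\ge 1$ already has two neighbours on $R$. As a non-branch vertex it has no other neighbours, so only $r_0$ can be adjacent to $B$. Let $W$ consist of $B$, all vertices of the finite path components, and the initial vertex $r_0^R$ of each ray component $R$. Then $W$ is finite.

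The key observation is that the only way out of the tail $r_1r_2\ldots$ is through $r_0$. Hence $d(r_i,r_0)=i$, and $d(r_i,x)=i+d(r_0,x)$ for every vertex $x\notin\{r_1,r_2,\ldots\}$.

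Now let $u\ne v$. If either one lies in $W$, that vertex resolves the pair, since its distance to itself is $0$ and to the other vertex is positive. So we may assume both lie in tails of ray components, say $u=r_i\in R$ and $v=s_j\in R'$, where $i,j\ge 1$.
\begin{itemize}
\item If $R=R'$, then $i\ne j$, and $r_0$ resolves them because $d(u,r_0)=i\ne j=d(v,r_0)$.
\item If $R\ne R'$, put $D=d(r_0,s_0)\ge 1$. Then $d(u,r_0)=i$ and $d(v,r_0)=j+D$, while $d(u,s_0)=i+D$ and $d(v,s_0)=j$. If neither $r_0$ nor $s_0$ resolved $u$ and $v$, we would have $i=j+D$ and $j=i+D$. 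This gives $D=0$, a contradiction.
\end{itemize}
Thus $W$ is a finite resolving set, and $\beta(G)<\infty$.

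The main point requiring care is the claim that ray components attach to the rest of $G$ only at their first vertex. This is what makes distances along a ray additive, and it follows directly from the degree count above. Everything else is bookkeeping.
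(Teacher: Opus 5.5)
Your proof is correct, but it takes a different route from the paper's.

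\textbf{The paper's argument.} The paper takes $W$ to be the branch vertices together with all their neighbours, and works with a shortest $u$--$v$ path $P$. If $P$ passes through a branch vertex $w$, then $w$ or its neighbour on $P$ resolves $u,v$. Otherwise $P$ is extended to a maximal path or ray whose internal vertices all have degree $2$. A branch-vertex end $w$ of it then resolves $u,v$, or, if $V(Q)$ induces a cycle, $w$ or its neighbour on it does.

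\textbf{Your argument.} You instead decompose $G-B$ into finitely many components and rule out cycle and double-ray components by the degree count. You then put every finite path component wholesale into $W$ and keep a single anchor per ray component. Each ray tail is attached to the rest of $G$ only through its initial vertex, so distances along it are additive. The verification then reduces to the short computation with $D=d(r_0,s_0)$.

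\textbf{Comparison.} Your argument is more elementary. It avoids the paper's maximal-extension step (the union of a chain of paths being a ray) and its case analysis on the degrees of the endpoints. It is essentially the strategy the paper itself uses later for Theorem~\ref{thm:fintree}, adapted to several rays by using two anchors instead of one.

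What the paper's choice buys is a quantitative bound. Its resolving set has size at most $\sum_{b\in B}(\deg(b)+1)$, which is independent of the lengths of the threads between branch vertices. Your set contains entire finite path components, so it yields only finiteness. That is all the proposition asks for.
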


\begin{proof}
If $G$ is a path, a ray, or double ray, it has a resolving set of size at most $2$. If $G$ is a cycle, it has a resolving set of size $2$.

So in the rest of the proof, we will assume that $G$ is not a path, a ray, a double ray or a cycle. Given that $G$ is connected, we conclude that $G$ has a branch vertex.

Let $W$ be the set of branch vertices and their neighbors; we claim $W$ is a resolving set.

Let $u,v\in V(G)$. If at least one of them is a branch vertex, then the pair is resolved. Otherwise, let $P$ be a shortest $u$--$v$ path.

If $P$ has a branch vertex $w$, then let $w'$ be a neighbor of $w$ on $P$. Note that $w,w'\in W$, and one of them resolves $u$ and $v$.

If $P$ has no branch vertex, then every internal vertex of $P$ is of degree $2$. Let $\mathcal{Q}$ be the set of all paths $Q$ that contain $P$, and for which every internal vertex is of degree $2$. The set $\mathcal{Q}$ is nonempty (it contains $P$) and it is partially ordered by the subpath relation.

Suppose $\mathcal{Q}$ has no maximal element, and let $Q=\bigcup\mathcal{Q}$. Recall that $G$ is not a double ray, so $Q$ is not a double ray. Hence $Q$ is a ray; let its initial vertex be $w$. Recall that $G$ is not a ray, so $\deg(w)\geq 3$, and so $w\in W$. Then $w$ resolves $u$ and $v$.

Now suppose that $\mathcal{Q}$ does have a maximal element, and let it (again) be called $Q$. Note that $Q$ is a path, so it has two end vertices. Consider their degrees.

If the pair of degrees is $(1,1)$, then $G$ is a path. $(1,2)$ is impossible, because the degree $2$ vertex would have to have its neighbors on $Q$. If it is $(2,2)$, then $G$ is a cycle. So at least one end is a branch vertex $w$. Let $w'$ be the neighbor of $w$ on $Q$, and note that $w,w'\in W$.

Note that $V(Q)$ induces a path or a cycle. In the former case $w$ resolves $u$ and $v$, and in the latter, $w$ or $w'$ (or both) resolve $u$ and $v$.
\end{proof}

\begin{theorem}\label{thm:inftree}
Let $G$ be a finicyclic graph. Then $\beta(G)<\infty$ if and only if $G$ has finitely many branch vertices.
\end{theorem}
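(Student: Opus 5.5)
The backward direction is immediate from Proposition~\ref{prop:caceres}: a finicyclic graph with finitely many branch vertices has $\beta(G)<\infty$. So the work is the forward direction. We will prove its contrapositive: if $G$ is finicyclic with infinitely many branch vertices, then no finite set $W$ resolves $G$.

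Fix a finite $W$. By Theorem~\ref{thm:char} we may choose a finite $U$ with $G-U$ a forest. Enlarge this to a finite connected subgraph $H$ containing $W$, $U$, and every cycle of $G$; this is possible since there are finitely many cycles, each finite, and $G$ is connected. Since $G$ is locally finite, $G-V(H)$ has finitely many components. Each component is a tree $T$ attached to $H$ through finitely many edges. In fact a component meets $H$ in exactly one edge: two attaching edges would produce a cycle through $T$, and every cycle lies inside $H$.

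Because $H$ is finite and there are infinitely many branch vertices, some component $T$ contains infinitely many vertices of degree at least $3$ in $G$. Let $r$ be the vertex of $T$ adjacent to $H$, and pick a branch vertex $x\in T$. Its degree in $T$ is at least $3$, or it equals $r$ and has degree at least $2$ in $T$. We need $x$ to have two neighbors $y_1,y_2$ in $T$ that lie farther from $r$ than $x$ does. If $x\neq r$, then $x$ has at least $3$ neighbors in $T$, exactly one of which is on the path toward $r$, leaving two others. If $x=r$, then both of its $T$-neighbors are farther from $r$. Every shortest path from $y_i$ to any $w\in W$ leaves $T$ through $r$. Since $T$ is a tree, this path runs $y_i\to x\to\cdots\to r\to\cdots\to w$. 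Hence $d(y_1,w)=1+d(x,w)=d(y_2,w)$ for every $w\in W$, and $W$ does not resolve $y_1,y_2$. So $\beta(G)=\infty$.

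The main obstacle is the structural step: showing that outside a finite connected subgraph $H$ containing all cycles and $W$, each component is a tree hanging from $H$ by a single edge. This is what makes distances from a component to $W$ factor through a single cut vertex. The key observation is that a second edge into $H$ would create a cycle not contained in $H$, contradicting the choice of $H$. Local finiteness ensures the number of components is finite, so by pigeonhole one of them contains an infinite, hence nonempty, set of branch vertices.
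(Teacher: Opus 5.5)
Your proof is correct, but it takes a different route from the paper's.

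The paper's argument for the forward direction runs in two steps. First it shows that some ray $R=v_0v_1\ldots$ carries infinitely many branch vertices, by deleting branch-free tails of all rays and finding a ray in what remains. Then it goes far enough along $R$ that the $v_i$ avoid every cycle and lie beyond the points of $R$ nearest to each $w\in W$. It picks a branch vertex $v_\ell$ there and uses Lemma~\ref{lem:cyclecreator} to show that $v_\ell$ lies on every shortest $w$--$v_{\ell+1}$ path and every shortest $w$--$v'$ path, where $v'$ is a neighbor of $v_\ell$ off the ray.

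You instead build a finite connected core $H$ containing $W$ and every cycle. You observe that the rest of $G$ consists of trees, each hanging from $H$ by a single edge. Then any branch vertex $x$ outside $H$ separates two of its children from all of $W$ at once.

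What your route buys:
\begin{itemize}
\item It avoids the ray-existence step, which is the part of the paper's argument needing the most care (for instance, the connectivity of $G'$ after the tails are deleted).
\item It avoids the distance lemma and Lemma~\ref{lem:cyclecreator}.
\item It gives a cleaner structural picture.
\item You need only one branch vertex outside $H$. So the pigeonhole over components, which is the only place you invoke local finiteness, is not actually needed.
\end{itemize}

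Two small points about your write-up. The appeal to Theorem~\ref{thm:char} is superfluous, since $H$ already contains every cycle. Also, you should make $H$ nonempty (this matters only if $W=\emptyset$), so that every component of $G-V(H)$ actually has an attaching edge.

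What the paper's route buys in turn: it locates the unresolved pairs along a single ray, and it reuses the cycle-creation lemma it had already developed for Theorem~\ref{thm:char}.
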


\begin{proof}
The backward implication follows directly from Proposition~\ref{prop:caceres}.

For the forward implication, 
suppose $G$ is a finicyclic graph with infinitely many branch vertices. Let the set of rays of $G$ be $\mathcal{R}$. Note that even in a locally finite $G$, $\mathcal{R}$ may be uncountable (for example, see Figure~\ref{fig:binarytree}). We will show that at least one element of $\mathcal{R}$ has infinitely many branch vertices of $G$ on it. Suppose this is not the case. Then, for each $R\in\mathcal{R}$, there exists a vertex $v_R\in V(R)$ such that the subray $v_R R$ contains no branch vertices of $G$. The graph $G'=G-\bigcup_{R\in\mathcal{R}}V(v_RR)$ is connected, and since no branch vertices were removed, it still contains infinitely many branch vertices. In particular, it is an infinite graph, so $G'$ has a ray $R$. Since $R$ is also a ray of $G$, we would have $R\in\mathcal{R}$, which is a contradiction.

Now let $R=v_0v_1\ldots$ be a ray of $G$ with infinitely many branch vertices. Let $W\subseteq V(G)$ be finite. We will  show that $W$ is not a resolving set.  
Pick an index $k$ large enough so that for $i\ge k$, we have
\begin{itemize}
    \item[(a)] $d(v_i,w)> d(w,R)$ for all $w\in W$;
    \item[(b)] $v_i$ has empty intersection with every cycle in $G$.    
\end{itemize}
Now let $\ell$ be the smallest integer with $\ell \ge k$ and $\deg(v_{\ell}) \ge 3$.
Let $v'$ be a vertex adjacent to $v_{\ell}$ other than $v_{\ell-1}$ or $v_{\ell+1}$. Property (b) guarantees that $v'$  is not on $R$. Let $m$ be the largest index for which $d(w,v_m)=d(w,R)$.   Note that it is possible for $w=v_m$.   Also note that, by property (a), $m< \ell$.

We will show that every shortest $w$--$v_{\ell+1}$ path, as well as every shortest $w$--$v'$ path,  goes through $v_{\ell}$, from which it will follow that $w$ cannot resolve $v_{\ell+1}$ and $v'$.    Indeed, let $P'$ be a shortest $w$--$v_m$ path. The interior of $P'$ has empty intersection with $R$. Hence we may define a path $P$ as 
$$P=wP'v_mRv_{\ell+1}.$$
(Note that in the case that $w=v_m$ we have
$P=v_mRv_{\ell+1}$.)
Assume by way of contradiction that $Q$ is a shortest $w$--$v_{\ell+1}$ path not containing $v_{\ell}$.  It would then follow immediately from Lemma \ref{lem:cyclecreator} that $v_{\ell}$ is contained in a cycle,  contradicting condition (b).   To make an analogous argument for any shortest $w$--$v'$  path, it remains to show that $v'$  is not contained in $P'$.  Indeed, if we assume for a contradiction that $v'\in P'$, then $v'P'v_mRv_{\ell}v'$ would be a cycle containing $v_{\ell}$,  contradicting property (b). 
\end{proof}

Of course, if $\beta(G)=\infty$, then $\beta_S(G)=\infty$, so we only need to resolve the characterization for strong dimension when there are finitely branch vertices.
This case will also depend on the number of ends in $G$. By Corollary~\ref{cor:3timer}, if $G$ has more than one end, then it has infinite strong dimension. The following theorem resolves the remaining case.
\begin{figure}
    \centering
    \begin{tikzpicture}
        \node[circle, fill=black,scale = 0.5,label] (A) at (0,0) {};
        \node[circle, fill=black,scale = 0.5,label] (B) at (-1,-.5) {};
        \node[circle, fill=black,scale = 0.5,label] (C) at (1,-.5) {};
        \node[circle, fill=black,scale = 0.5,label] (D) at (-1.5,-1) {};
        \node[circle, fill=black,scale = 0.5,label] (E) at (-.5,-1) {};
        \node[circle, fill=black,scale = 0.5,label] (F) at (1.5,-1) {};
        \node[circle, fill=black,scale = 0.5,label] (G) at (.5,-1) {};

        \node[circle, fill=black,scale = 0.5,label] (h) at (-1.7,-1.5) {};
        \node[circle, fill=black,scale = 0.5,label] (i) at (-.3,-1.5) {};
        \node[circle, fill=black,scale = 0.5,label] (j) at (1.7,-1.5) {};
        \node[circle, fill=black,scale = 0.5,label] (k) at (.3,-1.5) {};

        \node[circle, fill=black,scale = 0.5,label] (l) at (-1.3,-1.5) {};
        \node[circle, fill=black,scale = 0.5,label] (m) at (-.7,-1.5) {};
        \node[circle, fill=black,scale = 0.5,label] (n) at (1.3,-1.5) {};
        \node[circle, fill=black,scale = 0.5,label] (o) at (.7,-1.5) {};
        \node at (-1,-2) {\rotatebox{90}{$\cdots$}};
        \node at (1,-2) {\rotatebox{90}{$\cdots$}};
        
        \draw (A) to (B);
        \draw (A) to (C);
        \draw (D) to (B);
        \draw (E) to (B);
        \draw (F) to (C);
        \draw (G) to (C);
        \draw (D) to (l);
        \draw (D) to (h);
        \draw (E) to (m);
        \draw (E) to (i);
        \draw (F) to (n);
        \draw (F) to (j);
        \draw (G) to (k);
        \draw (G) to (o);
    \end{tikzpicture}
    \caption{The infinite binary tree has an uncountable number of rays. Each ray can be described by the binary representation of any number in $[0,1]$.}
    \label{fig:binarytree}
\end{figure}
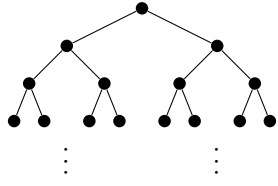
\begin{theorem}\label{thm:fintree}
Let $G$ be a finicyclic graph with one end. If $G$ has finitely many branch vertices, then $\beta_S(G)<\infty$.
\end{theorem}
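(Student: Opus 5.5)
The plan is to show that $G$ is a finite graph with a single pendant ray attached, and then to write down an explicit finite strong resolving set. If $G$ is finite there is nothing to prove, so assume $G$ is infinite. Apply Lemma~\ref{lem:partition} to get a finite set $W$ such that $G-W$ has finitely many components, each a path, a ray, or a double ray.

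\emph{Ruling out the double ray.} If some component is a double ray, Lemma~\ref{lem:partition} says $G$ itself is a double ray. But a double ray has two ends, which contradicts the hypothesis that $G$ has one end.

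\emph{Exactly one ray component.} Suppose two components of $G-W$ were rays $R_1,R_2$. They lie in different components of $G-W$, so the finite set $W$ separates every tail of $R_1$ from every tail of $R_2$. Hence $R_1$ and $R_2$ belong to different ends, again a contradiction. There are finitely many components, and $G$ is infinite and locally finite. So some component is infinite, and it must be a ray. Thus $G-W$ has exactly one ray component $R=r_0r_1r_2\ldots$, and all other components are finite paths.

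\emph{$R$ is attached at a single edge.} The set $W$ contains every branch vertex. Any $r_i$ with $i\ge1$ already has degree $2$ inside $R$, so a neighbor in $W$ would make it a branch vertex, which is impossible. Likewise $r_0$ has degree $1$ in $R$, so two neighbors in $W$ would make it a branch vertex. Since $G$ is connected, $r_0$ therefore has exactly one neighbor $x\in W$. Consequently $H=G-\{r_1,r_2,\ldots\}$ is a finite graph, and $G$ is $H$ with the ray $R$ glued at the vertex $r_0$. Moreover $r_0$ is a cut vertex separating $\{r_1,r_2,\ldots\}$ from the rest of $H$.

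\emph{The resolving set.} We claim $S=V(H)$ is a strong resolving set; it is finite. Take distinct vertices $u,v$.
\begin{itemize}
\item If $u\in S$, then $w=u$ works: every shortest $v$--$u$ path contains $u$.
\item Otherwise $u=r_i$ and $v=r_j$ with $1\le i<j$, and we take $w=r_0\in S$. Since $\deg(r_k)=2$ for $k\ge1$ and $r_0$ is a cut vertex, the unique $r_0$--$r_j$ path is $r_0Rr_j$. This path is shortest and contains $r_i$.
\end{itemize}
Hence $\beta_S(G)\le |V(H)|<\infty$.

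The main obstacle is the structural step showing that exactly one ray survives and that it meets the rest of the graph through a single edge. This is where both the one-end hypothesis and the fact that all branch vertices lie in $W$ are used. Once that structure is in place, the resolving argument is routine.
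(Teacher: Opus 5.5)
Your proof is correct and follows the paper's argument essentially step for step. You use the same application of Lemma~\ref{lem:partition} and the same one-end arguments to exclude a double ray and a second ray component. Your resolving set $V(H)=W\cup P_1\cup\cdots\cup P_\ell\cup\{r_0\}$ is exactly the paper's, and $r_0$ resolves pairs on the ray in the same way; you also make explicit, via the degree-$2$ property, why $r_0Rr_j$ is the unique $r_0$--$r_j$ path, which the paper leaves implicit. The only slip is your claim that $r_0$ has exactly one neighbor in $W$: this fails when $G$ is itself a ray, where $W=\emptyset$, but there $S=\{r_0\}$ still works by your same argument.
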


\begin{proof}
Let $G$ be a finicyclic graph with one end, and with finitely many branch vertices. We apply Lemma~\ref{lem:partition} to get a finite set $W$ and a partition $D,R_1,\ldots,R_k,P_1,\ldots,P_\ell$. Notice that $G$ is not a double-ray, because it has only one end, so $D=\emptyset$.

Since $G$ is infinite, we have $k\geq 1$. In fact, we can show that $k=1$. To see this, suppose that $k\geq 2$. Then $G[R_1]$ and $G[R_2]$ are disjoint rays of $G$, and for all $v\in R_1\cup R_2$, we have $\deg_G(v)\leq 2$. So every $R_1$--$R_2$ path in $G$ goes through $W$. This shows that they are in different ends, contradicting the assumption that $G$ has one end.

Now $r_1$ be the initial vertex of $R_1$, and let $W'=W\cup P_1\cup\cdots\cup P_\ell\cup\{r_1\}$. Note that $0<|W'|<\infty$. We will prove that $W'$ is a resolving set of $G$. Let $u,v\in V(G)$, and assume that $u,v\not\in W'$. Then $u,v\in R_1$. Without loss of generality, $d(u,r_1)<d(v,r_1)$ (they can not be equal), so then $u$ is on a shortest $v$--$r_1$ path in $G$.
\end{proof}

We summarize the findings of this section in the following corollary.

\begin{corollary}
Let $G$ be an infinite finicyclic graph.
\begin{itemize}
\item The weak dimension of $G$ is finite if and only if $G$ has finitely many branch vertices.
\item The strong dimension of $G$ is finite if and only if $G$ has one end and finitely many branch vertices.
\end{itemize}
\end{corollary}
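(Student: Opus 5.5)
The corollary is a summary, so I would prove it by assembling results already established, one bullet at a time.

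For the first bullet I would simply invoke Theorem~\ref{thm:inftree}. It states exactly that a finicyclic graph has finite weak dimension if and only if it has finitely many branch vertices. Being infinite is not even needed there.

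For the second bullet, the forward direction splits into two cases.
\begin{itemize}
\item If $G$ has infinitely many branch vertices, then $\beta(G)=\infty$ by Theorem~\ref{thm:inftree}, and so $\beta_S(G)\ge\beta(G)=\infty$.
\item If $G$ has at least two ends, then $\beta_S(G)=\infty$ by Corollary~\ref{cor:3timer}.
\end{itemize}
Hence if $\beta_S(G)<\infty$, $G$ must have finitely many branch vertices and at most one end. Since $G$ is infinite and locally finite, the proposition from Section~2 (Diestel's Proposition 8.2.1) says $G$ contains a ray. So $G$ has at least one end, and therefore exactly one.

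The backward direction is exactly Theorem~\ref{thm:fintree}: a finicyclic graph with one end and finitely many branch vertices has finite strong dimension.

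The only point needing any care is where the hypothesis that $G$ is infinite enters. It is needed to rule out ``zero ends'' in the forward direction of the second bullet; a finite graph has no ends but finite strong dimension. Otherwise the argument is a direct combination of the cited results, so I expect no genuine obstacle.
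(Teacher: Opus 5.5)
Your proposal is correct and matches the paper's own argument: the first bullet is Theorem~\ref{thm:inftree}, and the second bullet combines $\beta_S(G)\ge\beta(G)$, Theorem~\ref{thm:inftree} and Corollary~\ref{cor:3timer} for the forward direction with Theorem~\ref{thm:fintree} for the converse. You also correctly make explicit a step the paper leaves implicit, namely that an infinite locally finite graph contains a ray and so has at least one end; this is the only place where infiniteness is used.
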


\section{Two open questions on strong dimension}

Another possible approach to characterizing infinite graphs with finite strong dimension involves considering a specific partial orientation of these graphs.

Let $G$ be a graph and $W\in V(G)$. Rays that have their initial vertex in $W$ are called \emph{$W$-rays}. A \emph{geodesic ray} is a ray $R$ such that for all $u,v\in V(R)$, the path $uRv$ is a shortest $u$--$v$ path in $G$. Accordingly, \emph{geodesic $W$-rays} are geodesic rays with their initial vertex in $W$.

For a ray $R$ with vertices (in order outward) $v_0,v_1,\ldots$, we can definite a natural orientation with $v_i\to v_{i+1}$ for all $i\in\mathbb{N}$; we will use the notation $\d{R}$ for this natural orientation.

We will use the term \emph{directed graph} for a graph in which some edges are directed, but others are not. One can view the undirected edges as directed in both ways. An \emph{orientation} of $G$ is a directed graph formed from $G$ with some of its edges directed.

\begin{definition}
Let $G$ be a graph and $W\subseteq V(G)$. Let $\mathcal{W}$ be the collection of the geodesic $W$-rays. Let $G_W$ be an orientation of $G$ defined as follows. For the edge $uv\in E(G)$, we orient $u\to v$ in $G_W$, if for all $R\in\mathcal{W}$ for which $u,v\in V(R)$, the orientation in $\d{R}$ is $u\to v$; otherwise we leave them undirected.
\end{definition}

\begin{definition}
A \emph{bad pair} in $G_W$ is a pair of vertices $u$, $v$, such that there is neither a $u$--$v$ path, nor a $v$--$u$ path.
\end{definition}
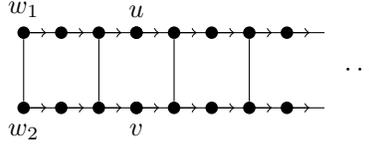
\begin{figure}
    \centering
    \begin{tikzpicture}
        \tikzset{edge/.style = {->}}
        
        \node[circle, fill=black,scale = 0.5,label = above:$w_1$] (A) at (0,0) {};
        \node[circle, fill=black,scale = 0.5] (B) at (1,0) {};
        \node[circle, fill=black,scale = 0.5] (C) at (2,0) {};
        \node[circle, fill=black,scale = 0.5,label = above:$u$] at (1.5,0) {};
        \node[circle, fill=black,scale = 0.5,label = below:$v$] at (1.5,-1) {};
        \node[circle, fill=black,scale = 0.5,label = below:$w_2$] (D) at (0,-1) {};
        \node[circle, fill=black,scale = 0.5] (E) at (1,-1) {};
        \node[circle, fill=black,scale = 0.5] (F) at (2,-1) {};
        \node[circle, fill=black,scale = 0.5] (H) at (3,0) {};
        \node[circle, fill=black,scale = 0.5] (I) at (3,-1) {};

        \node (G) at (4.5,-.5) {$\cdots$};

        \draw (A) -- (4,0);
        \draw (D) -- (4,-1);
        \draw (A) -- (D);
        \draw (C) -- (F);
        \draw (B) -- (E);
        \draw (H) -- (I);

        \foreach \i in {1,...,4}
        {
        \node[circle, fill=black,scale = 0.5] at (\i-0.5,0) {};
        \draw[edge] (\i-1,0) -- (\i-0.7,0);
        \draw[edge] (\i-0.5,0) -- (\i-.2,0);
        \node[circle, fill=black,scale = 0.5] at (\i-0.5,-1) {};
        \draw[edge] (\i-1,-1) -- (\i-0.7,-1);
        \draw[edge] (\i-0.5,-1) -- (\i-.2,-1);
        }
    \end{tikzpicture}
    \caption{The vertices $u$, $v$ are a bad pair in $G_W$ when $G$ is the infinite broken ladder and $W=\{w_1,w_2\}$.}
    \label{fig:badpair}
\end{figure}

The two open questions we offer are the following.

\begin{question}
Let $G$ be a graph. Is it true that if for all finite $W\in V(G)$, there is a bad pair in $G_W$, then $\beta_S(G)=\infty$.
\end{question}

\begin{question}
Let $G$ be a graph. Is it true that if for all finite $W\in V(G)$, there is no bad pair in $G_W$, then $\beta_S(G)<\infty$.
\end{question}

\bibliographystyle{abbrv}
\bibliography{smetric}

\end{document}